\newcommand{\ga}[2]{\begin{gather}\label{#1}#2 \end{gather}}
\theoremstyle{plain}
\newtheorem{theorem}{Theorem}[section]
\newtheorem{corollary}[theorem]{Corollary}
\theoremstyle{definition}
\newtheorem{example}[theorem]{Example}
\newtheorem{remark}[theorem]{Remark}
\newcommand{\CC}{\mathbb{C}}
\newcommand{\NN}{\mathbb{N}}
\newcommand{\QQ}{\mathbb{Q}}
\newcommand{\ZZ}{\mathbb{Z}}
\newcommand{\FF}{\mathbb{F}}
\newcommand{\E}{\mathcal{E}}
\newcommand{\K}{\mathcal{K}}
\newcommand{\V}{\mathcal{V}}
\newcommand{\sL}{\mathcal{L}}
\newcommand{\sV}{\mathcal{V}}
\newcommand{\sO}{\mathcal{O}}
\renewcommand{\setminus}{\smallsetminus}
\let\lim=\relax
\DeclareMathOperator*{\lim}{lim}
\begin{document}

\title[Moduli spaces of local systems]{Survey   on special subloci of the moduli spaces of  local systems on complex varieties }
\author{H\'el\`ene Esnault }
\address{ The Institute for Advanced Study, School of Mathematics, 1 Einstrin Dr., Princeton, NJ 08540, USA}
\email{esnault@ias.edu}
\address{Freie Universit\"at Berlin, Arnimallee 3, 14195, Berlin,  Germany}

\email{esnault@math.fu-berlin.de}

\thanks{The  author gratefully acknowledges  the support of  the  Institute for Advanced Study in Princeton where this report was written}

\begin{abstract}  It is a short report on recent results concerning special loci of the Betti moduli space of irreducible complex local systems on complex varieties. 

\end{abstract}

\maketitle

\section{Introduction}\label{sec:intro}

On   a smooth complex connected quasi-projective variety $X$, with underlying complex analytic manifold $X^{\rm an}$, 
one has the category
 ${\sf LocSys}(X^{\rm an})$ of complex local systems $\sV$ on $X^{\rm an}$. A complex point $a\in X(\CC)$ yields the structure of a Tannakian category where the neutral fiber functor is given by restriction $\sV|_a$. The Riemann-Hilbert correspondence establishes an equivalence of Tannakian categories between ${\sf LocSys}(X^{\rm an})$ and ${\sf Conn}^{\rm reg}(X)$, the categories of integrable connections $(E,\nabla)$ on $X$ which are regular singular at infinity, where the neutral fibre functor is given by restriction $E|_a$. The correspondence is defined by $(E,\nabla)\mapsto E^\nabla, \ \sV\mapsto (\sV\otimes_{\CC} \sO_{X^{\rm an}}, 1\otimes d) $ on $X^{\rm an}$. 
 One has then to descend the  analytic integrable connection to an algebraic one on $X$. This is the content of Deligne's Riemann-Hilbert correspondence 
  \cite[Thm.5.9]{Del70}.   
 
 On the other hand, in a given rank $r\in \NN_{>0}$, in case $X$ is projective,  Simpson \cite{Sim94} defines coarse moduli spaces  $\mathbf{{M}_B} (X,r)$  for  rank $r$ complex local systems and $\mathbf{{M}_{dR}} (X,r)$  for rank $r$ integrable connections. 
The Riemann-Hilbert correspondence establishes a complex analytic isomorphism between $\mathbf{{M}_B} (X,r)(\CC)$ and $\mathbf{{M}_{dR}} (X,r)(\CC)$.

Simpson considers $0$-dimensional  subloci in  $  \mathbf{{M}_B} (X,r)^{\rm irr}\subset  \mathbf{{M}_B} (X,r)$, the open of irreducible local systems. The underlying complex points are called {\it rigid local systems}. He conjectures  \cite[Conjecture p.9]{Sim92}  that they are {\it motivic}, that is  direct factors of Gau{\ss}-Manin local systems of  smooth projective families over some dense open of $X$, as is the case when $X$ has dimension one (\cite{Kat96}). He remarks \cite{Sim92}, {\it loc. cit.}  that his motivicity conjecture  in particular implies  the {\it integrality conjecture} predicting that  rigid local systems are integral, that is all the traces of the underlying representation of $\pi_1(X^{\rm an})$ of a rigid local system lie in $\bar \ZZ$. 
He extends his motivicity and  integrality conjectures   to the case when $X$ is quasi-projective.

In Section~\ref{sec:int} we report on our solution with Michael Groechenig   of the integrality conjecture for the $0$-dimensional components which have a reduced scheme structure. The corresponding rigid local systems are called {\it cohomologically rigid} as the condition is equivalent to saying that the Zariski tangent space, which is a cohomology group, is equal to zero.   If $X$ has 
dimension one,  all rigid systems are cohomologically rigid  \cite[Thm.1.1.2, Cor.1.2.4]{Kat96}. As of today, we do not know of a single example of a rigid local system which is not cohomologically rigid, while experts believe they should exist.  The general references are  \cite{EG18} for a Betti to $\ell$-adic proof in the general quasi-projective case, and \cite{EG18a} for de Rham-crysalline  aspects of rigid local systems in the  projective case.

When $r=1$ and $X$ is projective, Simpson  considers  Zariski closed subloci $S\subset  \mathbf{{M}_B} (X,1)(\CC)$ (so $  \mathbf{{M}_B} (X,1)^{\rm irr} =  \mathbf{{M}_B} (X,1)$)
which have the property that viewed in $\mathbf{{M}_{dR}} (X,1)(\CC)$,  they are still Zariski closed. He calls them {\it bialgebraic subloci}. He proves \cite[ Thm.3.1 (c)]{Sim93}  that they necessarily are a finite union over the components of subtori  $T_i$ translated by points $s_i$. As $\mathbf{{M}_B} (X,r)$  is defined over $\ZZ$ and $\mathbf{{M}_{dR}} (X,r)$ over the field of definition of $X$, it makes sense to request that $X$, then $ S$ and its image in $\mathbf{{M}_{dR}} (X,1)$ are all defined over $\bar \QQ$. If this is the case, then the $s_i$ may be chosen to be torsion \cite[Thm.3.3]{Sim93}, in particular  the torsion points are Zariski dense on $S$. Note however that in the real topology, torsion points are located in the closed subspace defined by  the radius equal to one, thus are not dense.  Simpson's theorem has been generalized to the case when $X$ is quasi-projective in \cite{BW17}. 

In Section~\ref{sec:arithm} we report on our work with Moritz Kerz on arithmetic subloci of $\mathbf{{M}_B} (X,1)$ \cite{EK19}. We fix a prime number $\ell$. As integral $\ell$-adic points of $\mathbf{{M}_{B}} (X,1)$ are \'etale, that is 
\ga{}{ \mathbf{{M}_{B}} (X,1)(\bar \ZZ_
\ell) ={\rm Hom}_{\rm cts} ( \pi_1(X)^{\rm \acute{e}t}, \bar \QQ_\ell^\times),\notag}  where   $ \pi_1(X)^{\rm \acute{e}t}$ is the \'etale fundamental group, this set is acted on by the Galois group $G$ of the field of definition of $X$.  
In addition, it carries the  topology which is the restriction of the Zariski topology on 
$\mathbf{{M}_{B}} (X,1)(\bar \QQ_
\ell)$,  which we call Zariski topology. 
The theorem, in the spirit of Tate's conjecture,  then says that {\it arithmetic subloci}, that is Zariski closed  subloci  of $\mathbf{{M}_{B}} (X,1)(\bar \ZZ_
\ell)$ 
which are $G$-invariant, are a finite union over the components of subtori  $T_i$ translated by torsion points $s_i$. In addition, the $T_i$ are motivic.  This in particular applies to jumping loci.  The geometric condition on $X$ for the theorem to be true is very weak, one just assumes that the weights on $H_1(X)$ in the sense of Hodge theory  are strictly negative. For example normal varieties fulfil this condition, but also many non-normal  varieties as well. One easily constructs weight  zero examples with   non-dense torsion points. So the theorem is sharp.

We extract from the theorem a formulation of consequences in  Corollary~\ref{cor:EK} which enables  us to formulate in Section~\ref{sec:comm_ques} wished
 analogs for  higher rank arithmetic subloci.

\medskip

{\it Acknowledgements:} I thank Michael Groechenig and Moritz Kerz.
This survey is based on our joint work.  All ideas and questions formulated here  reflect our discussions. I thank Carlos Simpson
for several exchanges in the last years concerning the theorems presented in this report.  We acknowledge the deep influence of his work on ours.

\section{Simpson's integrality conjecture} \label{sec:int}

Let $X$ be a smooth connected  quasiprojective  complex variety, $j: X\hookrightarrow \bar X$ be a good compactification, $D=\bar X\setminus X$ be the  normal crossings divisor at infinity. We write $D=\cup_{i=1}^N D_i$ where the $D_i$ are the irreducible components. 
Fixing quasi-unipotent conjugacy classes $\K_i \subset GL_r(\CC)$ for each $i$,  and a rank one complex  local system $\sL$ of order $d$ on $X$, there is an algebraic  stack ${\mathbf M_B}(\K_i, \sL)$ of finite type defined over a number field $K$ which represents the functor from affine connected algebraic varieties $T$ to groupoids, which assigns  geometrically irreducible  representations of the topological fundamental group of $X^{\rm an}$ 
of rank $r$ with values in the linear automorphisms of a vector bundle on $T$, with monodromy at infinity falling in the conjugacy classes $\K_i$ and with determinant $\sL$ (\cite[Prop.2.1]{EG18}). In particular there are finitely many $0$-dimensional components.  They are defined over a finite extension of $K$. The associated complex points are call  {\it complex rigid local systems} with local monodromies $\K_i$ at infinity and prescribed isomorphism class of determinant $\sL$.  The Zariski tangent space at $\V \in {\mathbf M_B}(\K_i, \sL)$ is  the intersection cohomology 
$H^1(\bar X, j_{!*}  \E nd^0(\V))$ where $^0$ indicates the trace-free part (\cite[Prop.2.3]{EG18}).  Among the  complex rigid local systems there are those which are smooth on ${\mathbf M_B}(\K_i, \sL)$ which is equivalent to saying that 
 $H^1(\bar X, j_{!*}  \E nd^0(\V))=0$. They are called {\it cohomologically rigid local systems}.

 If $X$ has dimension one,  as already mentioned in the introduction,  all  rigid local systems are cohomologically rigid. {\it As of today we do not know of a single example in higher dimension where this is not the case}. Our theorem \cite[Thm.1.1]{EG18} yields a positive answer to Simpson's integrality conjecture for cohomologically rigid local systems.
 \begin{theorem} \label{thm:EGint}
  Let $X$ be a smooth connected   quasiprojective  complex variety.  Then cohomologically rigid complex  local systems with finite determinant and  quasi-unipotent  local monodromies around the components at infinity of a good compactification   are integral. 
 
 \end{theorem}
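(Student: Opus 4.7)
My plan is to combine cohomological rigidity with Deligne's theory of $\ell$-adic companions (as completed by Drinfeld), exploiting the finiteness of cohomologically rigid local systems to run a global integrality argument prime by prime.

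\emph{Spreading out and finiteness.} Let $\V$ denote the cohomologically rigid local system in question. First I would spread the datum $(X, \bar X, D, \K_i, \sL)$ over a smooth finitely generated $\ZZ$-algebra $R$, obtaining $(\mathcal{X}, \bar{\mathcal{X}}, \mathcal{D}, \mathcal{L}) \to \Spec R$. By the preceding discussion the stack $\mathbf{M}_B(\K_i, \sL)$ is of finite type over a number field $K$, and $\V$ corresponds to a smooth reduced $0$-dimensional closed point; hence $\V$ is defined over $\bar\QQ$, only finitely many cohomologically rigid systems exist with the prescribed local data, and $\mathrm{Gal}(\bar\QQ/\QQ)$ permutes them.

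\emph{Passage to the \'etale side in characteristic $p$.} For each prime $\ell$, I would choose a closed point $s \in \Spec R$ of residue characteristic $p \neq \ell$ in sufficiently general position so that $\mathcal{X}_s \to \Spec \kappa(s)$ is smooth with good compactification. Cohomological rigidity is by definition the vanishing of $H^1(\bar X, j_{!*} \E nd^0(\V))$, a scheme-theoretic property stable under the \'etale-Betti comparison on the generic fiber of $\mathcal{X}$ and under smooth proper specialization to $\mathcal{X}_s$. Combined with the SGA~1 specialization surjection of tame fundamental groups, this should let me identify $\V$ with a cohomologically rigid point of the $\ell$-adic Betti moduli of $\mathcal{X}_s$, producing a lisse $\bar\QQ_\ell$-sheaf $\V_{\ell,s}$ on $\mathcal{X}_s$ whose trace function realizes that of $\V$.

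\emph{Companions and global integrality.} Because $\V_{\ell,s}$ is a continuous representation of a profinite group, its image lies in $GL_r(\bar\ZZ_\ell)$ up to conjugation (automatic $\ell$-adic integrality), and by Drinfeld's companion theorem it admits a $\bar\QQ_{\ell'}$-companion $\V_{\ell',s}$ for every prime $\ell'$. Cohomological rigidity is preserved under companions, since it is expressible through numerical invariants (Euler--Poincar\'e characteristic and local conjugacy classes at infinity) which a sheaf and its companion share; hence each $\V_{\ell',s}$ is again a cohomologically rigid point of the corresponding $\ell'$-adic moduli. The common Frobenius trace function on closed points of $\mathcal{X}_s$ then takes values in algebraic numbers that are $\ell'$-integral at every $\ell'$ simultaneously, hence in $\bar\ZZ$. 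Chebotarev density on $\mathcal{X}_s$, combined with the specialization surjection from a tame quotient of $\pi_1(X^{\an})$ to $\pi_1(\mathcal{X}_s)^{\et,\text{tame}}$, then propagates the integrality back to all traces of $\V$ on $\pi_1(X^{\an})$.

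\emph{Main obstacle.} The technical heart is the second step: matching the complex cohomologically rigid point $\V$ to an \'etale lisse sheaf $\V_{\ell,s}$ on $\mathcal{X}_s$. This requires comparing the formal neighborhoods of rigid points across the complex, $\ell$-adic, and characteristic-$p$ Betti moduli, and crucially relies on cohomological rigidity to pin down a unique isolated smooth reduced point under both specialization and the \'etale-Betti comparison. The quasi-projective setting adds the further challenge of transporting the parabolic/boundary monodromy data along $\mathcal{D}_s$ coherently across these three moduli, which is the principal new difficulty beyond the projective case.
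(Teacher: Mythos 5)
Your overall strategy (finiteness of cohomologically rigid systems with fixed local data, specialization to characteristic $p$, Drinfeld's companions) is the strategy of the paper, but your second step contains a circularity that breaks the argument. For an arbitrary prime $\ell$ you cannot ``identify $\V$ with \dots a lisse $\bar\QQ_\ell$-sheaf $\V_{\ell,s}$'': a representation of the discrete, finitely generated group $\pi_1(X^{\rm an})$ with values in $GL_r(\bar\QQ_\ell)$ extends continuously to the profinite completion $\pi_1(X)^{\et}$ (and only then can be specialized through Grothendieck's tame specialization map to an \'etale sheaf on $\mathcal{X}_s$) if and only if its image is bounded, i.e.\ conjugate into $GL_r(\bar\ZZ_\ell)$ --- which is precisely the integrality at $\ell$ you are trying to prove. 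Your later remark that $\ell$-adic integrality is ``automatic'' because $\V_{\ell,s}$ is a representation of a profinite group exhibits the circle: if the passage to the \'etale side were available at every $\ell$, no companions would be needed at all. The paper breaks the circle using the finiteness from your first step: the finitely many cohomologically rigid systems are defined over a number field, hence have altogether only finitely many non-integral places, so one chooses a single auxiliary place $\lambda\mid\ell$ at which \emph{all} of them are integral; only at that place does one pass to \'etale sheaves, descend to $X_{\bar\FF_p}$ and then (by a Simpson-type argument) to $X_{\FF_q}$, and finally take $\ell'$-companions for every other prime $\ell'$.

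The endgame is also organized differently from your Chebotarev step, and your version has a gap there too. The $\ell'$-companion is not a conjugate of $\V$ (its traces only correspond to those of $\V_{\ell,s}$ under the chosen field isomorphism), so the $\bar\ZZ_{\ell'}$-integrality of the companion does not by itself transfer to $\V$. What one shows is that the companion is again geometrically irreducible, with the same quasi-unipotent local data and finite determinant, and still cohomologically rigid; in the paper this last point uses the identification of $H^1(\bar X, j_{!*}\E nd^0(\V))$ with the pure weight-one part of $\oplus_j H^j(X,\E nd^0(\V))$, which is preserved by companions --- an Euler--Poincar\'e characteristic count, as you propose, does not detect the vanishing of this single cohomology group in dimension greater than one. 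Consequently the companion, pulled back through the specialization surjection, is again one of the finitely many cohomologically rigid Betti points, and it is a $\bar\ZZ_{\ell'}$-point because it comes from an \'etale sheaf over a finite field. Since the companion association permutes this finite set, every member, in particular $\V$, is integral at $\lambda'$; integrality at places above $p$ holds for $p$ large because everything is defined over a number field. This permutation-of-a-finite-set mechanism is what replaces your attempted propagation of trace integrality back to $\pi_1(X^{\rm an})$.
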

 \begin{proof}[Sketch of the proof.]
 The proof  is a {\it Betti to $\ell$-adic} proof. It uses  the existence of $\ell '$-adic companions of irreducible $\ell$-adic local systems on smooth varieties defined over finite fields, a deep theorem conjectured by Deligne  \cite[Conj.1.2.10]{Del80}, proven by L. Lafforgue in dimension one \cite[Chap.VII]{Laf02} as a consequence of the Langlands correspondence, and in higher dimension on a smooth variety by Drinfeld  \cite[Thm.1.1]{Dri12} by reduction to the dimension one case using geometry and representation theory.
 We do not know a pure characteristic zero proof of Theorem~\ref{thm:EGint}.
 
 A key ingredient of the proof is that there are finitely many local systems to consider (the rank is fixed), they are all defined over a number field, $K$ say,  and the possibly non-integral places are in finite number. So one can choose one place $\lambda$ of $K$ dividing the prime number $\ell$  which is integral for all of them. For  $p$ large, all those local systems are integral at the places dividing $p$. Grothendieck's specialization theorem on the tame fundamental group  enables us to descend
  those local systems  to $\bar \QQ_\ell$-adic local systems $\V_{{\bar \FF_p}}$ on $X_{\bar \FF_p}$ with the same determinant, 
  monodromy conditions at infinity and vanishing of $H^1(\bar  X_{\bar \FF_p}, j_{!*} \E nd^0 \V_{{\bar \FF_p}})$. 
  An argument essentially due to Simpson \cite[Prop.3.1]{EG18} shows that the $\bar \QQ_\ell$-adic local systems on $X_{\bar \FF_p}$ then descend to $X_{\FF_q}$ for some $q=p^s$.  Thus they have companions. Since $H^1(\bar  X_{\bar \FF_p}, j_{!*} \E nd^0 \V_{{\bar \FF_p}})$ is  precisely the pure weight one part of $\oplus_j H^j(X_{\bar \FF_p}, \E nd^0 \V_{{\bar \FF_p}}),$ it is preserved by the companion association. Since geometric irreducibility, the local mononodromy at infinity  and the finite determinants are preserved by the companion association, those companion $\ell'$-adic local systems, viewed as $\bar \ZZ_{\ell'}$-points of ${\mathbf M_B}(\K_i, \sL),$ are cohomogically rigid, thus the $\lambda$ to $\lambda'$ game induces a bijection of the finite set of cohomologically rigid local systems, thus they are integral at the place $\lambda'$.   This finishes the proof. 
 \end{proof}
 
 In fact, we understand more  facts on the {\it de Rham-crystalline} side towards Simpson's geometricity conjecture, if $X$ is projective.  
We have the following theorem \cite[Thm.1.6]{EG18a}.

 \begin{theorem}
 Let $X$ be a smooth projective variety over $\CC$, $\V$ be a rigid local system and $(E,\nabla)$ be its underlying connection. The connection  $(E,\nabla)$,  first  spread out over a ring of finite type over $\ZZ$,  then restricted to the formal scheme $\hat X_{W(\bar \FF_p)}$, where ${\rm Spec}(\bar \FF_p)\to {\rm Spec}(R)$ is a place of good reduction  for $(R, X, (E,\nabla)),$  has the structure of a crystal, which, tensor $\QQ$, is an  isocrystal with a Frobenius structure.
 
 \end{theorem}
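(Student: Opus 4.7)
The plan is to spread out the data, reduce modulo a prime $p$ of good reduction, identify the reduction with a crystal via the smooth formal scheme correspondence, and finally exploit rigidity to produce the Frobenius structure.

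First, I would spread out $(X,E,\nabla)$ to a smooth projective scheme $\mathcal X\to \Spec R$ equipped with a relative flat connection $(\mathcal E,\nabla)$, where $R\subset \CC$ is a finitely generated $\ZZ$-subalgebra. At a closed point $\Spec(\bar\FF_p)\to \Spec R$ of good reduction (outside the finitely many primes where torsion obstructs smoothness of $\mathcal X$ or flatness of $\mathcal E$), base change to $W_n(\bar\FF_p)$ for each $n$ yields a compatible system of flat connections $(\mathcal E_n,\nabla_n)$ on the $W_n(\bar\FF_p)$-schemes $\mathcal X_n$, which assemble into an integrable connection on the $p$-adic formal scheme $\hat{\mathcal X}_{W(\bar\FF_p)}$.

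Second, I would upgrade this formal flat connection to a crystal. A coherent module with integrable connection on a smooth formal scheme over $W(\bar\FF_p)$ tautologically defines a crystal on the infinitesimal site; to pass to the PD crystalline site, or at least to a convergent isocrystal after $\otimes \QQ$, the required input is $p$-adic convergence of the Taylor series built from iterated applications of $\nabla$. For $p$ outside an explicit finite set the denominators appearing in this series are uniformly bounded, so that $(\mathcal E,\nabla)\otimes \QQ$ defines a convergent isocrystal on $\mathcal X_{\bar\FF_p}/W(\bar\FF_p)[\tfrac{1}{p}]$.

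Third, for the Frobenius structure, I would argue by pigeonhole. The absolute Frobenius $F$ of $\mathcal X_{\bar\FF_p}$ acts by pullback on convergent isocrystals on $\mathcal X_{\bar\FF_p}/W(\bar\FF_p)$, preserving rank and determinant. By spreading out and de Rham--crystalline comparison, the isocrystal attached to $(\mathcal E,\nabla)\otimes \QQ$ corresponds to a rigid point of an associated crystalline moduli problem, and the set of such rigid points is finite. Hence $F^*$ permutes this finite set, some iterate $F^{a*}$ fixes the class of $(\mathcal E,\nabla)\otimes \QQ$, and the resulting isomorphism $F^{a*}((\mathcal E,\nabla)\otimes \QQ) \cong (\mathcal E,\nabla)\otimes \QQ$ is the desired $F^a$-isocrystal structure.

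The main obstacle is the identification, in step three, of the rigid de Rham points with rigid crystalline points, and the compatibility of Frobenius pullback with this identification. Concretely, one needs the vanishing of $H^1(\mathcal X_{\bar\FF_p}, \E nd^0((\mathcal E,\nabla)\otimes \QQ))$ computed crystallinely to follow from the vanishing of its complex de Rham counterpart; this reduces to an integral de Rham--crystalline comparison, valid for $p$ larger than an explicit bound depending on $\dim X$. Once this dictionary is in place, the counting argument producing the Frobenius isomorphism is a formal consequence.
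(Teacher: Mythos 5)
There is a genuine gap, and it sits exactly where the real content of the theorem lies. In your second step you assert that a coherent module with integrable connection on $\hat{X}_{W(\bar \FF_p)}$ gives a crystal (or, after tensoring with $\QQ$, a convergent isocrystal) because ``for $p$ outside an explicit finite set the denominators appearing in the Taylor series are uniformly bounded.'' This is not true for a general connection and is not something one can get for free by excluding finitely many primes: the Taylor/stratification series converges on PD (or tubular) thickenings precisely when the $p$-curvature of the mod $p$ reduction is (quasi-)nilpotent, and for an arbitrary flat connection spread out from characteristic zero there is no reason for this to hold at almost all $p$ (if motivicity were known, Katz's theorem would give it, but motivicity is exactly Simpson's open conjecture). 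Proving nilpotency of the $p$-curvature for rigid connections is the heart of the proof. The paper does it by a counting argument: the Simpson correspondence identifies the number of isolated points of $\mathbf{{M}_{\rm Dol}}(X,r)^{\rm st}$ with that of $\mathbf{{M}_{\rm B}}(X,r)^{\rm irr}$; for $p$ large this count is preserved in characteristic $p$, and the Ogus--Vologodsky inverse Cartier transform sends rigid nilpotent Higgs bundles to connections with nilpotent $p$-curvature which are again rigid (via the splitting of the Azumaya algebra to order $p-1$ along the zero section), so by counting every rigid connection mod $p$ has nilpotent $p$-curvature. Your proposal has nothing playing this role, so the claim that the reduction is a crystal is unsupported.

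Your third step is closer in spirit to the paper: there too, finiteness of the set of rigid objects forces a periodicity which yields the Frobenius structure. But the paper's mechanism is the Lan--Sheng--Zuo Higgs--de Rham flow (alternating the associated-graded Higgs bundle with the inverse Cartier transform), shown to be cyclic because there are finitely many rigid objects in a fixed rank; the periodicity of the flow is what produces the $F$-structure. Your direct pigeonhole on $F^*$ acting on isocrystals leaves open precisely the point you flag as ``the main obstacle'': why $F^*$ preserves the finite set of isocrystals attached to rigid connections. Moreover, your proposed repair via vanishing of $H^1(\E nd^0)$ quietly assumes cohomological rigidity, whereas the theorem (as the paper emphasizes) assumes only rigidity, i.e.\ that the point is isolated, with no condition on the tangent space. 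So both halves of the argument need the characteristic-$p$ nonabelian Hodge theory input that your sketch omits.
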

 It is interesting to note that we do not need the cohomological condition in the  theorem.  By definition, the bundle $E$  on $\hat X_{W(\FF_p)}$ is respected by the connection. If $(E,\nabla)$ is motivic,  by \cite[3.1]{Kat72} the $p$-curvature of the mod $p$ reduction is nilpotent, which is to say that $(E,\nabla)$  on $\hat X_{W(\FF_p)}$ is a crystal (see \cite[Thm.2.20~2.6]{EG18a}). The Frobenius structure comes from the conjugate filtration. 
 \begin{proof}[Sketch of Proof]
 One way to see the nilpotency of the $p$-curvature is to count. We have finitely many $(E,\nabla)$ which come from rigid local systems of rank $r$. Then one introduces the other side of the Simpson correspondence, the moduli 
 $  \mathbf{{M}_{\rm Dol}} (X,r)^{\rm st}$ 
  of stable Higgs bundles. Since the correspondence yields a real analytic isomorphism between $  \mathbf{{M}_{\rm dR}} (X,r)^{\rm st}(\CC)$  and $\mathbf{{M}_{\rm B}} (X,r)^{\rm irr}(\CC)$, the number of isolated Higgs bundles is the same. On the other hand, going to characteristic $p$, which for $p$ large preserves this number, the Ogus-Vologodsky correspondence \cite{OV07} assigns  via the Cartier inverse operator $C^{-1}$ to a stable Higgs bundle with nilpotent curvature a connection. Thus it is enough to show that this connection is rigid as well, which may be extracted from 
  the splitting to order $(p-1)$ along the zero-section of the cotangent bundle of the Azumaya algebra  which is the center of the sheaf of differential operators \cite[Cor.2.9]{OV07}. 
 As for the Frobenius structure, one applies the theory of Lan-Sheng-Zuo of Higgs to de Rham flows \cite{LSZ13} produced by a rigid connection, which, given the finitely many rigid objects in a given rank, has to be cyclic, see \cite[Section~4]{EG18a}.
 \end{proof}

 \section{Arithmetic subloci in rank one} \label{sec:arithm}
 Our main theorem \cite[Thm.1.2]{EK19} yields an $\ell$-adic  analog of Simpson's theorem.
 \begin{theorem}  \label{thm:EKmain}
 Let $X$ be a complex quasi-projective variety defined over $\CC$. Let  $F\subset \CC$ be a field of finite type over which $X$ is defined, and $G$ be the Galois group ${\rm Aut}(\bar F/F)$ where $\bar F$ is the closure of $F$ in $\CC$. Fix a prime number $\ell$.  Let $S\subset  \mathbf{{M}_{B}} (X,1)(\bar \ZZ_\ell)$ be an arithmetic sublocus. 
  Then if the Hodge weights of $H_1(X, \ZZ)$ are strictly negative, $S$ is the finite union over its irreducible components of  subtori $T_i$ translated by torsion points $s_i$.  Moreover, the $T_i$ are motivic.  
 
 \end{theorem}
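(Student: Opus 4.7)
The plan is to reduce the description of $S$ to a Tate-type statement about $G$-stable $\bar\QQ_\ell$-subspaces of $H^1(X_{\bar F},\bar\QQ_\ell)$, which under the weight hypothesis becomes a mixed Galois representation with strictly positive weights.

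First I would identify
\[
\mathbf{{M}_B}(X,1)(\bar\ZZ_\ell)\;=\;\hom_{\cont}(\pi_1^{\et}(X)^{\ab},\bar\ZZ_\ell^\times).
\]
The Teichm\"uller splitting $\bar\ZZ_\ell^\times=\bar\FF_\ell^\times\times(1+\mathfrak{m})$ decomposes each $\chi$ as $\chi=\chi_t\cdot\chi_u$, with $\chi_t$ of finite order prime to $\ell$ and $\chi_u$ valued in $1+\mathfrak{m}$; the $\ell$-adic logarithm then identifies $\chi_u$ with a class $\log\chi_u\in H^1(X_{\bar F},\bar\QQ_\ell)$. The decomposition is $G$-equivariant, and the subtorus structure on $\mathbf{{M}_B}(X,1)$ is recovered by exponentiation from the $\bar\QQ_\ell$-linear structure of $H^1$.

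Next, the weight hypothesis, combined with the standard comparison between Hodge and $\ell$-adic weights for varieties of finite type over $F$, forces $H^1(X_{\bar F},\bar\QQ_\ell)$ to be a mixed $G$-representation with strictly positive weights. Given $\chi\in S$, the orbit $G\cdot\chi_t$ is finite (it consists of continuous characters to a discrete group), while the Zariski closure of $G\cdot\chi_u$ inside $\mathbf{{M}_B}(X,1)(\bar\QQ_\ell)$ is the exponential of the $\bar\QQ_\ell$-subspace $V_\chi\subset H^1(X_{\bar F},\bar\QQ_\ell)$ spanned by $G\cdot\log\chi_u$. Running over $\chi\in S$ and using that $S$ is Zariski closed and $G$-stable, this writes $S$ as a finite union of translates $s_i\,T_i$ of subtori $T_i$ by torsion points $s_i$.

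The main obstacle is the motivicity of the $T_i$. This amounts to a Tate-type statement: every $G$-stable $\bar\QQ_\ell$-subspace of $H^1(X_{\bar F},\bar\QQ_\ell)$ is the $\ell$-adic realization of a sub-$1$-motive of the Deligne $1$-motive attached to $H^1(X)$ (whose existence is precisely guaranteed by the strict weight hypothesis). For pure weight~$1$ this is Faltings' isogeny theorem for abelian varieties over fields of finite type; for weight~$2$ it is the corresponding Tate conjecture for divisors, also known in this generality. The finite-type hypothesis on $F$ is essential here, and plays the role of Simpson's bialgebraicity condition in the complex case.
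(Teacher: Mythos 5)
The central step of your argument is the assertion that the Zariski closure of $G\cdot\chi_u$ is $\exp(V_\chi)$, where $V_\chi\subset H^1(X_{\bar F},\bar\QQ_\ell)$ is the span of $G\cdot\log\chi_u$. This is not justified, and it is essentially the whole content of the theorem. First, the exponential of an arbitrary $\bar\QQ_\ell$-subspace of $H^1$ is not an algebraic subtorus of $\mathbf{{M}_B}(X,1)$: subtori correspond to saturated quotients of $H_1(X,\ZZ)$, i.e.\ to $\QQ$-rational subspaces, so you would still have to show that $V_\chi$ is $\QQ$-rational. Second, and more fundamentally, nothing in your argument explains why the Zariski closure of a single Galois orbit (or of a Zariski-closed $G$-stable set) should be stable under the group law at all; you never use the weight hypothesis operationally. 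In the paper's proof this is exactly where the work happens: after reducing to a smooth point with trivial residual character and passing to a polydisc where $\log$ is an isomorphism, one uses Bogomolov's theorem (Litt's generalization in the mixed case) to produce elements of $G$ acting on $H_1(X,\ZZ_\ell)$ as homotheties by $\alpha^{-1}$ with $\alpha$ close to $1$ and not a root of unity; $G$-invariance of $S$ then forces $\log S$ to be homogeneous near the smooth point, hence (by smoothness) analytically linear, and a separate lemma upgrades analytic linearity to algebraic linearity. The weight hypothesis is precisely what makes these dilation elements available, as the paper's weight-zero counterexample (two rational curves meeting in two points) shows. Homogeneity is also what produces the torsion point $s_i$ on each component; in your sketch the passage from "one orbit closure per $\chi$" to "finitely many torsion-translated subtori exhausting $S$" is likewise not addressed.

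Your treatment of motivicity has a related problem: the proposed Tate-type statement, that every $G$-stable $\bar\QQ_\ell$-subspace of $H^1(X_{\bar F},\bar\QQ_\ell)$ is the realization of a sub-$1$-motive, is false. On the weight-two part, which is a sum of Tate twists $\bar\QQ_\ell(-1)$, every subspace is $G$-stable, and already inside the weight-one part a Galois-stable $\bar\QQ_\ell$-line (e.g.\ a CM eigenline in the Tate module of an elliptic curve) need not come from a sub-$1$-motive. What the paper actually needs, and proves via Faltings' theorem, is much weaker: the subtori $T_i$, once produced, correspond to $\QQ$-rational (indeed $\ZZ$-rational) quotients of $H_1(X,\ZZ)$ whose $\ell$-adic realizations are Galois stable, and only for such rational subobjects does one invoke Faltings to conclude they underlie quotient Hodge structures. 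So the motivicity step cannot be outsourced to a general Tate conjecture in the way you propose, and in any case it presupposes the torus structure that your argument has not established.
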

 Here a subtorus $T\subset \mathbf{{M}_{B}} (X,1)$ is called motivic if there is a  torsion free quotient Hodge structure $H_1(X, \ZZ) \twoheadrightarrow \pi'$ such that $T={\rm Hom}(\pi', \CC^\times)$, or equivalenty if there is a morphism $f: X\to Y$ of algebraic varieties such that $T=f^* \mathbf{{M}_{B}} (Y,1)^1$ where the subscript $^1$ indicates the connected component of $1$ (see \cite[Prop.7.3]{EK19}.)
 
 \begin{proof}[Sketch of proof] The proof uses the $\ell$-adic topology.  We reduce to the case where $S$ is irreducible. We fix a residual representation $\bar \xi$  
 of a point of $\xi\in S$.  If its Teichm\"uller  lift $[\bar \xi]$ in  ${\rm Hom}_{\rm cts}(\pi_1(X)^{\rm \acute{e}t}, \bar \QQ_\ell^\times) $ lies on $S$, we found a torsion point. If not, we replace $S$ by $[\bar \xi]^{-1}\cdot S$ so we may assume that $[\bar \xi]$ is the trivial representation. 
 Then  translating $S$ by a high power of $\ell$, we may assume that $S$ intersects a polydisk $${\rm Hom}_{\rm cts}(\pi_1(X)^{\rm \acute{e}t}, \bar \QQ_\ell^\times)(\rho) \subset {\rm Hom}_{\rm cts}(\pi_1(X)^{\rm \acute{e}t}, \bar \QQ_\ell^\times) $$ of radius $\rho$ where $\rho$ lies in the valued group of $\bar \QQ_\ell^\times$ on which
 the $\ell$-adic logarithm $\log$ is defined. It equates this analytic polydisc with the polydisc
 $${\rm Hom}_{\rm cts}(\pi_1(X)^{\rm \acute{e}t}, \bar \QQ_\ell)(\rho)=H^1(X, \bar \QQ_\ell)(\rho)\subset H^1(X, \bar \QQ_\ell).$$ 
 Let us assume for simplicity that $H_1(X, \ZZ)$ is pure, thus of strictly negative weight given our assumption. The goal is then to show that $\log(S\cap {\rm Hom}_{\rm cts}(\pi_1(X)^{\rm \acute{e}t}, \bar \QQ_\ell^\times)(\rho))$
 is homogeneous, thus in particular  contains $0$, which  implies  that  $ S\cap {\rm Hom}_{\rm cts}(\pi_1(X)^{\rm \acute{e}t}, \bar \QQ_\ell^\times)$ contains the trivial character, so the initial $S$ we started with contains a torsion point with given residual representation $[\bar \xi]$ (\cite[Prop.4.3]{EK19}).  In particular the torsion points of $S$ are Zariski dense, and we can choose one which in addition is smooth on $S$.
  After translation, $1\in S$ is a smooth point.  To prove the linearity, one considers the weights of $G$, see \cite[Prop.6.2]{EK19}. It uses a theorem of Bogomolov \cite{Bog80}, according to which  for $\alpha \in \ZZ_\ell^\times$ close to $1$ and not a root of unity, there is an element in $G$ which acts semi-simply on $H_1(X, \ZZ_\ell)$ with eigenvalues $\alpha^{-1}$. As $0$ is smooth, $ S\cap {\rm Hom}_{\rm cts}(\pi_1(X)^{\rm \acute{e}t}, \bar \QQ_\ell^\times)$ is linear. 
 Analytic linearlity implies algebraic linearity \cite[Lem.4.2]{EK19}. This concludes the proof if $H_1(X, \ZZ)$ is pure. 
  In the mixed case, one uses 
 Litt's generalization  \cite[Lem.2.10]{Lit18} of Bogomolov's theorem instead.
 
 Motivicity is proved using Faltings' theorem \cite[Thm.1]{Fal86}.
 
 \end{proof}
 \begin{example}
 Theorem~\ref{thm:EKmain} is sharp. Let $X$ be the union of two rational curves defined over $\QQ$ meeting in two points which are rational over $\QQ$. Then $H_1(X)=\ZZ$ has weight $0$, $\pi_1(X)=\hat \ZZ$ with trivial $G$-action. The character $\xi: \hat \ZZ \to \bar \ZZ_\ell^\times$ which assigns to $1$ a element in $\bar \ZZ_\ell^\times$ which is not a root of unity is thus not torsion. So $S= \mathbf{{M}_{B}} (X,1)(\bar \ZZ_\ell)$ does not fulfil the conclusion of the theorem.
 
 \end{example}

 We can draw from Theorem~\ref{thm:EKmain} obvious corollaries.   Let us remark that a point $\xi\in \mathbf{{M}_{B}} (X,1)(\bar \ZZ_\ell)$ is torsion if and only if its $G$-orbit is finite. 
 \begin{corollary} \label{cor:EK}
 In the situation of Theorem~\ref{thm:EKmain} 
 \begin{itemize}
 \item[1)] the  points in $S$  with finite $G$-orbit are Zariski dense;
 \item[2)] given $S_1, S_2  \subset \mathbf{{M}_{B}} (X,1)(\bar \ZZ_\ell)$  irreducible  arithmetic subloci,  a point $\xi\in S_1\cap S_2$ which is smooth on both $S_i$, such that $T_\xi S_1=T_\xi S_2\subset T_\xi \mathbf{{M}_{B}} (X,1)$, then
 $S_1=S_2$;
 \item[3)] given a prime $\ell'$, an algebraic isomorphism $\iota: \bar \QQ_\ell \xrightarrow{\cong} \bar \QQ_{\ell'}$, and an
 arithmetic sublocus $S\subset  \mathbf{{M}_{B}} (X,1)(\bar \ZZ_\ell)$, then $\iota(S)\subset  \mathbf{{M}_{B}} (X,1)(\bar \QQ_{\ell'})$  lies in 
 $ \mathbf{{M}_{B}} (X,1)(\bar \ZZ_{\ell'})$ and is arithmetic.  
  \end{itemize}
 
 \end{corollary}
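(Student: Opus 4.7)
The plan is to derive all three statements directly from the structural description given by Theorem~\ref{thm:EKmain}. After passing to irreducible components, an arithmetic sublocus takes the form $\bigcup_i s_i T_i$ with each $s_i$ a torsion point and each $T_i$ a motivic subtorus; since $T_i$ is motivic, it is defined over $\QQ$ via a Hodge-theoretic quotient $H_1(X,\ZZ)\twoheadrightarrow \pi'_i$, and the union $Z=\bigcup_i s_i T_i$ is a closed subscheme of $\mathbf{{M}_{B}}(X,1)$ defined over the number field generated by the $s_i$. Each part of the corollary is then a matter of unpacking this description component by component.

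For (1), each irreducible component $s_i T_i$ contains the translates $s_i \cdot t$ as $t$ ranges over the torsion subgroup of the algebraic torus $T_i$, which is Zariski dense in $T_i$; combined with the remark preceding the corollary that a point is torsion if and only if its $G$-orbit is finite, this yields Zariski density of the points with finite $G$-orbit.

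For (2), by irreducibility $S_i = s_i T_i$, and via the multiplication in $\mathbf{{M}_{B}}(X,1)$ the tangent space at any point $\xi\in s_i T_i$ is identified with the Lie-algebra translate $\xi\cdot\mathrm{Lie}(T_i)$. The hypothesis $T_\xi S_1=T_\xi S_2$ thus forces $\mathrm{Lie}(T_1)=\mathrm{Lie}(T_2)$, hence $T_1=T_2$ (two connected algebraic subgroups of $\mathbf{{M}_{B}}(X,1)$ with the same Lie algebra coincide), and then $s_1 T_1=s_2 T_2$ because both cosets contain $\xi$.

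For (3), one tracks the action of $\iota$ on the algebraic subscheme $Z$. As $Z$ is defined over a number field, $\iota$ carries it to $Z'=\bigcup_i\iota(s_i)\,T_i\subset\mathbf{{M}_{B}}(X,1)_{\bar\QQ_{\ell'}}$, again defined over a number field: $\iota(s_i)$ is a root of unity (field isomorphisms preserve torsion) and hence lies in $\bar\ZZ_{\ell'}^\times$, while each $T_i$ is motivic over $\QQ$ and thus preserved as a $\QQ$-scheme. Consequently $\iota(S)$, read off via this algebraic transport, agrees with $Z'(\bar\ZZ_{\ell'})$, which is of the structural shape dictated by the theorem and is therefore a Zariski closed, $G$-invariant subset of $\mathbf{{M}_{B}}(X,1)(\bar\ZZ_{\ell'})$, i.e.\ an arithmetic sublocus. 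The main obstacle here is to justify that the naive pointwise image of $\iota$ on $S$ coincides with $Z'(\bar\ZZ_{\ell'})$: an abstract field isomorphism $\iota$ need not preserve $\ell$-adic integrality on arbitrary characters $H_1(X,\ZZ)\to\bar\QQ_\ell^\times$, so the structure theorem must be invoked to identify $S$ scheme-theoretically with $Z(\bar\ZZ_\ell)$ and transport it along the algebraic $\bar\QQ$-structure rather than as analytic $\ell$-adic data.
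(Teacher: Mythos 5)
Your treatments of 1) and 2) coincide with the paper's own justification (given in the remark following the corollary): 1) is exactly the density of torsion points in each torsion-translated subtorus $s_iT_i$, i.e.\ in an extension of a finite group scheme by a torus, combined with the equivalence ``torsion $\Leftrightarrow$ finite $G$-orbit''; 2) is exactly the fact that a subtorus is determined by its tangent space at $1$, which is your Lie-algebra translation argument (the paper even notes the smoothness hypothesis is superfluous). For 3) the paper's entire justification is that ``it comes from the motivicity part of the theorem,'' and your transport-of-structure argument is the intended fleshing-out; two points there deserve care. First, your inference that $Z'=\bigcup_i\iota(s_i)T_i$ ``is of the structural shape dictated by the theorem and is therefore \dots $G$-invariant'' runs the theorem backwards: the theorem asserts that arithmetic loci have this shape, not the converse. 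What is actually needed is that torsion points have finite $G$-orbit for any $\ell'$ (clear, since they factor through finite quotients of $\pi_1(X)^{\et}$ and $G$ acts by precomposition) and that a motivic subtorus is $G$-stable in the $\ell'$-adic realization --- this is precisely where motivicity enters (the quotient $H_1(X,\ZZ)\twoheadrightarrow\pi'$ arises from a morphism $f:X\to Y$, so the corresponding subspace of $\ell'$-adic cohomology is Galois-stable), and it is the substance behind the paper's one-line remark. Second, the pointwise-image caveat you raise at the end is real and not merely a scruple: a character of $H_1$ taking the value $1/\ell'$ gives a $\bar\ZZ_\ell$-point whose naive image under $\iota$ is not $\ell'$-integral, so part 3) can only be established (and should be read) via the $\bar\QQ$-structure on the Zariski closure $Z=\bigcup_i s_iT_i$ supplied by the theorem, exactly as your final sentence proposes; with that reading your argument agrees with the paper's.
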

 \begin{remark} \label{rmk:EK}
 \begin{itemize}

 \item[1)] 1) is a direct consequence of the structure of commutative algebraic group schemes over an algebraically closed characteristic zero field which are extensions of a finite group scheme by a torus: its torsion points are dense.  We remark that 
 the converse to 1) is not true.  For example, if $X$ is an elliptic curve, then  $\mathbf{{M}_{B}} (X,1)$ is a two dimensional torus, but no dimension one subtorus is motivic. However any torus is the Zariski closure of its  torsion points. 
  \item[2)] 2) comes from the fact that a subtorus is determined by its Zariski tangent space at $1$.  The condition that $\xi$ is smooth on either $S_i$ is superfluous as it follows from the theorem, but we keep this formulation in view of Section~\ref{sec:comm_ques}. 
 \item[3)] 3) comes from the motivicity part of the theorem. 
 \end{itemize}
 \end{remark}
 
\section{Comments and questions} \label{sec:comm_ques}
\subsection{Integrality}
To bypass the cohomological argument in Section~\ref{sec:int}, and therefore to prove the integrality conjecture completely (should there really exist rigid local systems which are not cohomologically rigid), one has  to understand how the formal neighbourhood of $0$ in  $$\{x\in H^1(\bar X_{\bar \FF_p}, j_{!*}\E nd^0(\V)), \ 0=x\cup x\in H^2(\bar X_{\bar \FF_p},  j_{!*}\E nd^0(\V))\}$$  varies in the companion correspondence. Indeed, interpreted in Betti cohomology over $\CC$, this is the equation of the formal ring of $ \mathbf{{M}_{B}} (X,r)^{\rm irr}$ at $\V$ \cite[Cor.2.4]{Sim92}.  We do not know how to do this. 
\subsection{Higher rank arithmetic subloci}
In higher rank, given a prime number $\ell$, we define similarly {\it arithmetic subloci} of $\mathbf{{M}_{B}} (X,r)^{\rm irr}$ as Zariski closed subloci of 
$\mathbf{{M}_{B}} (X,r)^{\rm irr}(\bar \ZZ_\ell)$ which are $G$-invariant.  Here again the Zariski topology on 
$\mathbf{{M}_{B}} (X,r)^{\rm irr}(\bar \ZZ_\ell)$ is by definition the restriction of the Zariski topology on $\mathbf{{M}_{B}} (X,r)^{\rm irr}(\bar \QQ_\ell)$.
Then we ask at least for $X$ smooth projective over $\CC$ whether Corollary~\ref{cor:EK} is true,  where we replace $\mathbf{M_B}(X,1)$ by $\mathbf{M_B}(X,r)^{\rm irr}$, or  by   $\mathbf{M_B}(X,r, \sL)^{\rm irr}$ fixing the determinant $\sL$.   In the non-proper case, we ask the same question replacing $\mathbf{{M}_{B}} (X,r)^{\rm irr}$ which is only locally   of finite type by the moduli $\mathbf{{M}_{B}} (\K_i, \sL)$ as in Section~\ref{sec:int} or  by the moduli 
 $\mathbf{{M}_{B}} (\K_i)$ which is defined similarly without fixing the determinant. 
As we know very little on the topological fundamental group of a smooth quasi-projective complex variety, we are very far from understanding 1). For example, even if $S$ is the whole moduli space,  we do not even know the existence of a single point with finite Galois orbit. 
2) is accessible, but useless as we do not have 1). As for 3), a positive answer would prove Theorem~\ref{thm:EGint} without cohomological assumption.

 \end{document}